\newtheorem{theorem}{Theorem}
\newtheorem{definition}[theorem]{Definition}
\theoremstyle{remark}\newtheorem{remark}[theorem]{Remark}
\newtheorem{proposition}[theorem]{Proposition}
\begin{document}

{\let\thefootnote\relax\footnote{Date: 1st September 2016.

\textcopyright 2016 by the authors. Faithful reproduction of this article, in its entirety, by any means is permitted for noncommercial purposes.}}

\title{on existence of global solutions of the one-dimensional cubic NLS for initial data in the modulation space $M_{p,q}(\mathbb R)$. }

\author{L. Chaichenets}
\address{leonid chaichenets, department of mathematics, institute for analysis, karlsruhe institute of technology, 76128 karlsruhe, germany }
\email{leonid.chaichenets@kit.edu}

\author{D. Hundertmark}
\address{dirk hundertmark, department of mathematics, institute for analysis, karlsruhe institute of technology, 76128 karlsruhe, germany }
\email{dirk.hundertmark@kit.edu}

\author{P. Kunstmann}
\address{peer kunstmann, department of mathematics, institute for analysis, karlsruhe institute of technology, 76128 karlsruhe, germany }
\email{peer.kunstmann@kit.edu}

\author{N. Pattakos}
\address{nikolaos pattakos, department of mathematics, institute for analysis, karlsruhe institute of technology, 76128 karlsruhe, germany }
\email{nikolaos.pattakos@gmail.com}

\begin{abstract}
{We prove global existence for the one-dimensional cubic non-linear Schr\"odinger equation in modulation spaces $M_{p, p'}$ for $p$ sufficiently close to $2$. In contrast to known results, \cite{KAT} and \cite{BH}, our result requires no smallness condition on initial data. The proof adapts a splitting method inspired by work of Vargas-Vega and Hyakuna-Tsutsumi to the modulation space setting and exploits polynomial growth of the free Sch\"odinger group on modulation spaces. }
\end{abstract}

\maketitle
\pagestyle {myheadings}

\begin{section}{introduction and main result}
\markboth{\normalsize L. Chaichenets, D. Hundertmark, P. Kunstmann and N. Pattakos }{\normalsize  Global Existence in $M_{p,q}(\mathbb R)$}
In this paper we are going to investigate the global wellposedness theory for the one-dimensional cubic non-linear Schr\"odinger equation

\begin{equation}
\label{maineq}
\begin{cases} iu_{t}+u_{xx}\pm|u|^{2}u=0 &,\ (t,x)\in\mathbb R^{2}\\
u(0,x)=u_{0}(x) &,\ x\in\mathbb R\\
\end{cases}
\end{equation}
where $u_{0}$ lies in a modulation space $M_{p,q}$.
Local wellposedness in modulation spaces $M_{p,1}$ has been shown in \cite{BO} and there are some global existence results under smallness conditions on the initial value in \cite{KAT} and \cite{BH} (see also \cite{RUU} and \cite{WZCZ}). However, in dimension one the cubic nonlinearity is not covered by these results. On the other hand, it is well known that if $u_{0}\in L^{2}$ then the initial value problem (\ref{maineq}) is globally well posed and that the $L^{2}$ norm is conserved. This was proved in \cite{Tsu}. In \cite{VV} it was proved that under some weaker assumptions on $u_{0}$ we can still obtain global existence results even if the $L^{2}$ norm of $u_{0}$ is infinite. The idea is to split the initial data $u_{0}$ between two suitable function spaces and solve in each of them a different NLS and then combine the solutions to get a function that solves problem (\ref{maineq}). This idea was exploited further in \cite{HYTSU} for $\hat{u_{0}}\in L^{p'}$ with $p$ close to $2$. The method of splitting itself goes back to \cite{BOU} at least.

Before we state our result and the proper definition of $M_{p,q}$ let us denote by $S(\mathbb R)$ the set of all Schwartz functions and by $S'(\mathbb R)$ its dual space. Fix $s\in\mathbb R$ and $0<p, q\leq \infty$. Then

\begin{equation}
\label{def0}
M^{s}_{p,q}=\Big\{f\in S'(\mathbb R):\|f\|_{M^{s}_{p,q}}<\infty\Big\}
\end{equation}
where the quantity $\|f\|_{M^{s}_{p,q}}$ is defined as

\begin{equation}
\label{def1}
\|f\|_{M^{s}_{p,q}}=\Big\|\xi\mapsto (1+|\xi|)^{s}\|(V_{g}f)(\cdot, \xi)\|_{p}\Big\|_{q}
\end{equation}
and $V_{g}f$ is the short time Fourier transform of the function $f$ with window $g\in S(\mathbb R)\setminus\{0\}$, that is

\begin{equation}
\label{def2}
V_{g}f(x,\xi)=\int_{\mathbb R}e^{-i\xi y}\overline{g(y-x)}f(y)\ dy.
\end{equation}
It can be shown that different choices of the window function $g$ lead to equivalent norms on $M^{s}_{p,q}$. These spaces were first introduced in \cite{FEI} and many of their properties such as embeddings in other known function spaces and equivalent expressions for their norm can be found in \cite{BH} where it is also proved that for $s_{1}\geq s_{2}$, $0<p_{1}\leq p_{2}$ and $0<q_{1}\leq q_{2}$ we have the relation $M^{s_{1}}_{p_{1}, q_{1}}\subset M^{s_{2}}_{p_{2}, q_{2}}$. Since their introduction, modulation spaces have become canonical for both time-frequency and phase-space analysis. They provide an excellent substitute in estimates that are known to fail on Lebesgue spaces. 

Every time we write $\|f\|_{p}$ or $\|f\|_{L^{p}}$ we mean the usual $p$-norms in the Lebesgue spaces $L^{p}(\mathbb R)$. In addition, for a given interval $I\subset\mathbb R$ we use the notation $\|f\|_{L^{p}_{I}}$ for the $L^{p}$ norm of $f$ over $I$. We also denote by $M_{p,q}$ the modulation space $M_{p,q}^{0}$. 

To state our main result we need to define the following set of functions. 

\begin{definition}
\label{defmain}
For a given $r>1$, $0<\alpha<1$ and $c_{0}>0$ we define the set $S_{\alpha, c_{0}}^{r}$ to be the collection of all $u\in L^{2}+M_{r,\frac{r}{r-1}}$ such that for every $N\in\mathbb R_{+}$ there are functions $\phi^{N}\in L^{2}$ and $\psi^{N}\in M_{r,\frac{r}{r-1}}$ with the properties $u=\phi^{N}+\psi^{N}$ and $\|\phi^{N}\|_{2}\leq c_{0}N^{\alpha}$ and $\|\psi^{N}\|_{M_{r, \frac{r}{r-1}}}\leq \frac{c_{0}}{N}$. 
\end{definition}

\begin{remark}
\label{rem}
For any $r>2$ and $\alpha>0$ define the number $p=p(r,\alpha)=\frac{2r+2r\alpha}{r+2\alpha}\in(2,r)$. Then, we have the relation $M_{p,p'}\subset \bigcup_{c_{0}\in\mathbb R_{+}} S_{\alpha,c_{0}}^{r}$ which implies that the sets $S_{\alpha,c_{0}}^{r}$ are non empty. To see this we use Theorem $6.1$ D from \cite{FEI} which shows that $M_{p,p'
}$ can be obtained as the complex interpolation space between $L^{2}=M_{2,2}$ and $M_{r,r'}$, that is $M_{p,p'}=[L^{2},M_{r,r'}]_{\theta}$ for $\theta=\frac{\alpha}{\alpha+1}$. Next, we use Proposition $2.10$ from \cite{LUN} which states that $M_{p,p'}$ can be continuously embedded in the real interpolation space $(L^{2},M_{r,r'})_{\theta,\infty}$ and then we take a look at the $K$ functional which induces a norm to $(L^{2},M_{r,r'})_{\theta,\infty}$ by the formula

$$\|u\|_{\theta,\infty}=\sup_{t>0}\Big(t^{-\theta}\inf_{u=\phi+\psi}\Big[\|\phi\|_{2}+t\|\psi\|_{M_{r,r'}}\Big]\Big).$$
It is easy to see that for a given $N\in\mathbb R_{+}$, we must have $N^{-\alpha}=t^{-\theta}$ and $N=t^{1-\theta}$ or equivalently $t=N^{\alpha+1}$ and $\theta=\frac{\alpha}{\alpha+1}$.
\end{remark}

Our main theorem is as follows:

\begin{theorem}
\label{main}
Suppose that $u_{0}\in S_{\alpha, c_{0}}^{r}$ where we have $r\in(3,4]$, $c_{0}>0$ and $\alpha\in (0,\frac{6r-r^{2}}{22r^{2}-38r+12})$. Then, the Cauchy problem (\ref{maineq}) has a unique global solution $u$ that can be written as a sum of two functions $v, w$ that lie in the spaces

$$v\in L^{\frac{4r}{r-2}}_{loc}(\mathbb R: L^{r}(\mathbb R))\cap L^{\infty}_{loc}(\mathbb R: L^{2}(\mathbb R))$$
and

$$w\in L^{Q}_{loc}(\mathbb R:M_{r,\frac{r}{r-1}}(\mathbb R)),$$
for any sufficiently large $Q$. 
\end{theorem}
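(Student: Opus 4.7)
The plan is to adapt the splitting scheme of Vargas--Vega and Hyakuna--Tsutsumi to the modulation space setting, exploiting the fact that a member of $S_{\alpha,c_{0}}^{r}$ admits, for each scale $N$, a decomposition into a large $L^{2}$ piece and a small $M_{r,r/(r-1)}$ piece. Fix a target time $T>0$ and a parameter $N = N(T)$ to be tuned at the end. Use the hypothesis $u_{0}\in S_{\alpha,c_{0}}^{r}$ to write $u_{0}=\phi^{N}+\psi^{N}$ with $\|\phi^{N}\|_{2}\le c_{0}N^{\alpha}$ and $\|\psi^{N}\|_{M_{r,r/(r-1)}}\le c_{0}/N$. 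Invoke Tsutsumi's $L^{2}$ theory to obtain a global solution $v\in C(\R;L^{2})$ of (\ref{maineq}) with $v(0)=\phi^{N}$, satisfying $\|v(t)\|_{2}\equiv\|\phi^{N}\|_{2}\le c_{0}N^{\alpha}$ and, via Strichartz for the admissible pair $\bigl(\tfrac{4r}{r-2},r\bigr)$, a bound of the form $\|v\|_{L^{4r/(r-2)}([0,T];L^{r})}\le C T^{\sigma} N^{\alpha}$ for an explicit $\sigma=\sigma(r)$.

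Next, look for $u$ in the form $u=v+w$; then $w$ must solve the perturbed equation $iw_{t}+w_{xx}\pm\mathcal{N}(v,w)=0$ with $w(0)=\psi^{N}$, where $\mathcal{N}(v,w)=|v+w|^{2}(v+w)-|v|^{2}v$ is a sum of seven terms, each of total degree three and at most cubic in $w$. I would solve this equation by a contraction mapping argument in $X_{T}=L^{Q}([0,T];M_{r,r/(r-1)})$ for a suitably large exponent $Q$, using the Duhamel formula
\begin{equation*}
w(t)=e^{it\partial_{xx}}\psi^{N}\pm i\int_{0}^{t}e^{i(t-s)\partial_{xx}}\mathcal{N}(v(s),w(s))\,ds.
\end{equation*}
The two analytic inputs are the polynomial-growth bound $\|e^{it\partial_{xx}}f\|_{M_{r,r/(r-1)}}\le C(1+|t|)^{\gamma(r)}\|f\|_{M_{r,r/(r-1)}}$ for the free propagator on $M_{r,r/(r-1)}$, and the continuous embedding $M_{r,r/(r-1)}\hookrightarrow L^{r}$ (valid for $r>2$), which allows products in $\mathcal{N}(v,w)$ to be estimated in $L^{r/3}$ via Hölder and then transferred back into the modulation norm through a standard frequency-uniform localisation argument together with the algebra/product rule for $M_{r,r/(r-1)}$ in the range $r\in(3,4]$.

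With these tools the mixed terms of $\mathcal{N}(v,w)$ can be controlled by powers of $\|v\|_{L^{4r/(r-2)}_{T}L^{r}}\lesssim T^{\sigma}N^{\alpha}$ and $\|w\|_{X_{T}}$, multiplied by a time factor $T^{\mu(r)}$ coming from Hölder in time against $(1+|t-s|)^{\gamma(r)}$. Collecting all exponents, the map $w\mapsto$ right-hand side of Duhamel takes the ball of radius $R$ in $X_{T}$ to itself and is a contraction provided
\begin{equation*}
C\,T^{\gamma(r)}\,\frac{c_{0}}{N}\le \tfrac{R}{2}
\qquad\text{and}\qquad
C\bigl(T^{a}N^{2\alpha}R+T^{b}N^{\alpha}R^{2}+T^{c}R^{3}\bigr)\le \tfrac{R}{2},
\end{equation*}
for explicit exponents $a,b,c$ depending on $r$. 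I would then choose $N=T^{\kappa}$ for a carefully computed $\kappa=\kappa(r)$; the smallness of $1/N$ absorbs the first line, and the restriction $\alpha<\frac{6r-r^{2}}{22r^{2}-38r+12}$ turns out to be exactly the condition guaranteeing that the second line can be closed for arbitrarily large $T$. Uniqueness within the class described in the theorem follows from the same contraction applied to the difference of two solutions.

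The main obstacle I anticipate is the interplay between the time-growth $T^{\gamma(r)}$ of the free propagator on $M_{r,r/(r-1)}$ and the $N^{\alpha}$-growth of the Strichartz norms of $v$: because $v$ is not small, each occurrence of $v$ in $\mathcal{N}(v,w)$ introduces a factor that must be paid for by negative powers of $N$ coming from $\|\psi^{N}\|_{M_{r,r/(r-1)}}$, and the bookkeeping of time and $N$ exponents has to be done simultaneously for the seven nonlinear terms. Identifying the sharp admissible range of $\alpha$ amounts to optimising this joint balance, which is where the specific bound on $\alpha$ in the statement of Theorem \ref{main} arises.
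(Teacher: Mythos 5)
Your proposal has two genuine gaps. First, and most seriously, the single contraction on all of $[0,T]$ with one splitting parameter $N=N(T)$ cannot close. In your second smallness condition, the term of the Duhamel integral that is quadratic in $v$ and linear in $w$ contributes, after dividing by $R$, a coefficient of the form $C\,T^{a}\|v\|^{2}_{L^{4r/(r-2)}_{T}L^{r}}\sim C\,T^{a'}N^{2\alpha}$ with $a,a'>0$ (the inhomogeneous Strichartz estimate necessarily carries a positive power of the length of the time interval, cf.\ the $T^{1/2}$ in Proposition \ref{Tsu1}); this must be $\le 1/2$, is independent of $R$, and grows in both $T$ and $N$, so no choice of $\kappa$ in $N=T^{\kappa}$ rescues it. This is exactly why the paper's proof is not a one-shot argument: it works on short intervals of length $\delta_{N}^{(k)}=M_{k}^{-1}N^{-4\alpha}$, chosen so that $(\delta_{N}^{(k)})^{1/2}\|v^{(k)}\|^{2}\lesssim M_{k}^{-1/2}$ is small, runs the contraction there, and then \emph{re-splits} the data: the nonlinear interaction at the endpoint is shown to lie in $L^{2}$ with small norm (Proposition \ref{Tsu2}) and is absorbed into the new $L^{2}$ datum $\phi_{k+1}^{N}$, while $\psi^{N}$ is propagated linearly. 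Globality then reduces to showing that $\sum_{k}\delta_{N}^{(k)}\to\infty$ as $N\to\infty$ while the iterates remain under control, and it is this divergence condition (\ref{fin}), interacting with the polynomial growth (\ref{modinin}) of $e^{it\partial_{x}^{2}}$ on $M_{r,r/(r-1)}$ through the choice of the increasing sequence $M_{k}$, that produces the restriction $\alpha<\frac{6r-r^{2}}{22r^{2}-38r+12}$ — not the closing of a contraction on $[0,T]$. Your scheme contains no iteration and no mechanism for tracking the growth of $\|\phi_{k}^{N}\|_{2}$ or of the modulation norm of the linearly evolved $\psi^{N}$ across steps, which is where the real difficulty lies.

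Second, your choice of contraction space $X_{T}=L^{Q}([0,T];M_{r,r/(r-1)})$ requires a product (or trilinear) estimate in $M_{r,r/(r-1)}$, which you invoke as a ``standard frequency-uniform localisation argument together with the algebra/product rule.'' No such rule is available here: for $q=r/(r-1)$ the Young-type condition on the $\ell^{q}$-indices needed for $M_{p_{1},q_{1}}\cdot M_{p_{2},q_{2}}\subset M_{p,q}$ fails (one would need $2(1-\tfrac1r)\ge 2-\tfrac1r$), so $M_{r,r/(r-1)}$ is not stable under multiplication. The paper sidesteps this entirely: the fixed point is found in a ball of $L^{Q}_{I_{\delta}}L^{r}$ (the set $V_{N}^{(k+1)}$), the modulation space enters only through the embedding $M_{r,r/(r-1)}\hookrightarrow L^{r}$ applied to the free evolution of $\psi^{N}$, and every nonlinear estimate is a Lebesgue-space Strichartz estimate (Propositions \ref{Tsu1} and \ref{Tsu2}). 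You would need to restructure your argument along these lines for the nonlinear estimates to be justified.
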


\begin{remark}
In the literature the only global existence results for NLS (\ref{maineq}) with initial data in a modulation space require the modulation norm to be small. See \cite{KAT} and \cite{BH} (also \cite{RUU} and \cite{WZCZ}) for more details. As we shall see our approach works with no restrictions on the modulation norm of the initial condition.
\end{remark}

\begin{remark}
An easy computation shows that the maximum of the function $p(r,\alpha)$, defined in Remark \ref{rem}, over the domain $\{(r,\alpha)\in\mathbb R^2:3\leq r\leq 4, 0<\alpha\leq\frac{6r-r^{2}}{22r^{2}-38r+12}\}$ is attained at the corner $(3,\frac3{32})$ and gives the maximum value of $p=p(3,\frac3{32})=\frac{35}{17}$. Of course, in Theorem \ref{main}, $r$ lies in the interval $(3,4]$ which means that the space $M_{\frac{35}{17},\frac{35}{18}}$ is not covered by our main Theorem. 
\end{remark}

\begin{remark}
Let us make some comments about the key ingredients of the proof of Theorem \ref{main}. We follow closely the calculations presented in \cite{HYTSU} by Hyakuna-Tsutsumi where the initial data $u_{0}$ was split as a sum of a good function $\phi\in L^2$ and a bad function $\psi$ depending on a (large) parameter $N$. One has a global solution for the NLS with initial value $\phi$ and a global solution for the linear evolution with initial value $\psi$. The nonlinear interaction is shown to exist for a small time $\delta_N$ and takes values in $L^2$. This step can be repeated sufficiently many times (depending on $N$). In \cite{HYTSU}, many of the quantities were conserved from one step to the next, in particular, the norm for the linear evolution of $\psi$.
  This is not the case here since we deal with the $M_{r,r'}$ norm and have polynomial growth. Consequently, we have to consider in each step $k+1$ a smaller time existence interval $\delta_{N}^{(k+1)}<\delta_{N}^{(k)}$, and these have to be chosen in such a way that the (finite) series $\sum_k\delta_{N}^{(k)}$ diverges to infinity as $N\to\infty$. This turns out to be possible and is crucial for our global existence result.
The restriction on $\alpha$ (depending on $r\in(3,4]$) in Theorem \ref{main} comes precisely from the divergence condition on $\sum_k\delta_{N}^{(k)}$ for $N\to\infty$.
\end{remark}

\begin{remark}
Theorem \ref{main} remains true in higher dimensions and with nonlinearities of the form $|u|^{p-1}u$, $1<p<1+\frac4{d}$ with proper adjustments in the range of $r$ and $\alpha$. We concentrate on the one-dimensional cubic NLS for presentation reasons.
\end{remark}

\end{section}

\begin{section}{preliminaries}

From \cite{BH} it is known that for any $1<p\leq\infty$ the space $M_{p,1}$ embeds into $L^{\infty}\cap L^{p}$. This means that $M_{\infty, 1}\hookrightarrow  L^{\infty}$ and since $M_{2,2}=L^{2}$ we obtain by interpolation the embedding $M_{p, p'}\hookrightarrow L^{p}$, for $2\leq p\leq \infty$. Therefore, in the following we will use the embedding $M_{r,\frac{r}{r-1}}\hookrightarrow L^{r}$ which implies that there is a constant $c_{E}>0$ so that the inequality

\begin{equation}
\label{embed}
\|f\|_{r}\leq c_{E}\|f\|_{M_{r,\frac{r}{r-1}}}
\end{equation}
holds for all $f\in L^{r}$. Another fact about modulation spaces that we are going to use is inequality

\begin{equation}
\label{modin}
\|e^{it\partial_{x}^{2}}f\|_{M_{p,q}}\leq c(1+|t|)^{\frac12-\frac1{p}}\|f\|_{M_{p,q}}
\end{equation}
which holds for all $f\in M_{p,q}$ and $t\in\mathbb R$, where $c>0$ is independent of $f$ and the time $t$. Since our $p=r$ we have that there is a universal constant $c_{I}>0$ such that the following is true

\begin{equation}
\label{modinin}
\|e^{it\partial_{x}^{2}}f\|_{M_{r,\frac{r}{r-1}}}\leq c_{I}(1+|t|)^{\frac12-\frac1{r}}\|f\|_{M_{r,\frac{r}{r-1}}}.
\end{equation}

Before we proceed with the proof of our main Theorem, which is in Section \ref{pr}, we need to state and prove some preliminary results. Throughout the paper we will use the notation $G(v,w)=|v+w|^{2}(v+w)-|v|^{2}v$ and $\tilde{G}(v,w_{1},w_{2})=G(v,w_{1})-G(v,w_{2})$ for $v, w, w_{1}, w_{2}\in\mathbb C$. 

A pair of numbers $(r, q)$ is called admissible if $2\leq r\leq 6$, $q>2$ and they satisfy the equation

$$\frac1{q}+\frac1{2r}=\frac14.$$
For such pairs the following proposition is true.

\begin{proposition}
\label{admi}
Suppose that $v$ solves the initial value problem (\ref{maineq}) with initial data $\phi\in L^{2}$ and that $(r, q)$ is an admissible pair. Then there are constants $k_{1}, k_{2}>0$ independent of $\phi$ with the property that 

$$\|v\|_{L^{q}_{I_{\delta}}L^{r}}\leq k_{1}\|\phi\|_{2},$$
for all $\delta\in[0, (k_{2}\|\phi\|_{2})^{-4}]$, where $I_{\delta}=[0,\delta]$.
\end{proposition}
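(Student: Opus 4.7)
\smallskip
\textit{Approach.} The plan is to run the classical Strichartz/contraction argument for the $L^{2}$--critical one-dimensional cubic NLS, first establishing the bound for the endpoint admissible pair $(q_0,r_0)=(6,6)$ and then transferring it to an arbitrary admissible pair $(r,q)$ by Strichartz duality. By Tsutsumi's theorem, the solution $v$ satisfies Duhamel's formula
\[
v(t)=e^{it\pd_x^{2}}\phi\mp i\int_{0}^{t}e^{i(t-s)\pd_x^{2}}|v|^{2}v(s)\,ds
\]
together with mass conservation $\|v(t)\|_{2}=\|\phi\|_{2}$. The homogeneous and inhomogeneous Strichartz estimates (dual pair $(6',6')=(6/5,6/5)$) then yield, for any admissible $(q_{1},r_{1})$,
\[
\|v\|_{L^{q_{1}}_{I_{\delta}}L^{r_{1}}}\ \le\ k\,\|\phi\|_{2}\ +\ k\,\bigl\||v|^{2}v\bigr\|_{L^{6/5}(I_{\delta}\times\R)}.
\]

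\smallskip
\textit{Nonlinear estimate and bootstrap.} To control the trilinear term I interpolate in space, using $\|f\|_{18/5}\le \|f\|_{2}^{1/3}\|f\|_{6}^{2/3}$, and apply mass conservation to the $L^{2}$ factor. A Hölder-in-time step with conjugate exponents $5/2$ and $5/3$ then extracts a factor of $\delta^{1/2}$, producing
\[
\bigl\||v|^{2}v\bigr\|_{L^{6/5}(I_{\delta}\times\R)}=\|v\|_{L^{18/5}(I_{\delta}\times\R)}^{3}\ \le\ \|\phi\|_{2}\,\delta^{1/2}\,\|v\|_{L^{6}(I_{\delta}\times\R)}^{2}.
\]
Specializing the Strichartz bound to $(q_{1},r_{1})=(6,6)$ gives
\[
F(\delta):=\|v\|_{L^{6}(I_{\delta}\times\R)}\ \le\ k\|\phi\|_{2}\ +\ k\|\phi\|_{2}\,\delta^{1/2}\,F(\delta)^{2}.
\]
Tsutsumi's theorem also provides $v\in L^{6}_{\rm loc}(\R;L^{6}(\R))$, so $F$ is real-valued and continuous with $F(0)=0$. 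A standard continuity argument (the quadratic polynomial $k\|\phi\|_2+k\|\phi\|_2\delta^{1/2}X^2-X$ has the correct sign at $X=2k\|\phi\|_2$ as long as $\delta^{1/2}\|\phi\|_{2}^{2}$ is small enough) shows that, choosing $k_{2}$ sufficiently large, one has $F(\delta)\le 2k\|\phi\|_{2}$ throughout $[0,(k_{2}\|\phi\|_{2})^{-4}]$.

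\smallskip
\textit{Passing to arbitrary admissible pairs, and the main obstacle.} With the endpoint estimate in hand, the displayed Strichartz inequality applied with target pair $(q,r)$ and source pair $(6,6)$ becomes
\[
\|v\|_{L^{q}_{I_{\delta}}L^{r}}\ \le\ k\|\phi\|_{2}\ +\ k\|\phi\|_{2}\,\delta^{1/2}\,F(\delta)^{2}\ \le\ k\|\phi\|_{2}\,\bigl(1+4k^{2}\delta^{1/2}\|\phi\|_{2}^{2}\bigr),
\]
and the hypothesis $\delta\le(k_{2}\|\phi\|_{2})^{-4}$ makes the second term a bounded multiple of the first, yielding $\|v\|_{L^{q}_{I_{\delta}}L^{r}}\le k_{1}\|\phi\|_{2}$ with constants $k_{1},k_{2}$ independent of $\phi$. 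The one genuinely delicate point is the bootstrap on $F$: one must know in advance that $F$ is finite and continuous (so the continuity method applies) and carefully match the numerology so that the quadratic self-improvement closes precisely within the scale-critical window $\delta\sim\|\phi\|_{2}^{-4}$, which is dictated by the $L^{2}$-critical scaling of the one-dimensional cubic NLS.
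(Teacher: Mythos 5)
Your proof is correct and follows essentially the same strategy as the paper: the heart of both arguments is the $L^{6}_{I_\delta}L^{6}$ bound obtained from Duhamel's formula, the Strichartz estimate for the free evolution, a H\"older-in-time step extracting $\delta^{1/2}$, and a continuity (open--closed) argument that closes on the scale-critical window $\delta\lesssim\|\phi\|_{2}^{-4}$. The only cosmetic difference is the final step: the paper passes to general admissible pairs by interpolating between the trivial $L^{\infty}_{I_\delta}L^{2}$ bound (mass conservation) and the $L^{6}_{I_\delta}L^{6}$ endpoint, whereas you re-apply the full Strichartz inequality with the target pair and feed the established $L^{6}L^{6}$ bound into the inhomogeneous term --- both routes are standard and equally valid.
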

This can be proved by interpolating between the estimate for the $L^{\infty}_{I_{\delta}}L^{2}$ norm, which is obvious since the solution of NLS (\ref{maineq}) has conserved $L^{2}$ norm, and the estimate for the $L^{6}_{I_{\delta}}L^{6}$ norm which can be proved by showing that the set

$$\Big\{\delta\in[0, (k_{2}\|\phi\|_{2})^{-4}]:\|v\|_{L^{6}_{I_{\delta}}L^{6}}\leq k_{1}\|\phi\|_{2}\Big\}$$
is a nonempty, open and closed subset of $[0, (k_{2}\|\phi\|_{2})^{-4}]$. To see this write 

$$v=e^{it\partial_{x}^{2}}\phi\pm i\int_{0}^{t}e^{i(t-\tau)\partial_{x}^{2}}[|v|^{2}v]\ d\tau,$$
and estimate each of the two summands in the $L^{6}_{I_{\delta}}L^{6}$ norm. For the evolution part $e^{it\partial_{x}^{2}}\phi$ the result is known, see \cite{HTT} (Theorem $1.4$), and for the convolution integral part an application of H\"older's inequality implies the desired result. We refer to \cite{HTT} for more details, where a similar result was proved for Lorentz type norms. 
 
Next we define the triangles

$$\widehat{T}_{1}=\Big\{(x,y)\in\Big(0,\frac12\Big)^{2}:y<-x+\frac12\Big\}\cup\Big\{\Big(\frac12, 0\Big)\Big\}$$
and

$$\widehat{T}_{2}=\Big\{(x,y)\in\Big(\frac12,1\Big)^{2}:y>-x+\frac32\Big\}.$$
Then we have the following proposition which can be found in \cite{HYTSU} and \cite{KA}:

\begin{proposition}
\label{inter}
Suppose that $(\frac1{r}, \frac1{q})\in\widehat{T}_{1}$ and $(\frac1{p},\frac1{\gamma})\in\widehat{T}_{2}$ and

$$\frac2{\gamma}+\frac1{p}=2+\frac2{q}+\frac1{r}.$$
Then there is a constant $C>0$ that depends on $q, r$ such that the estimate

$$\Big\|\int_{0}^{t}e^{i(t-\tau)\partial_{x}^{2}}F(\cdot, \tau)\ d\tau\Big\|_{L^{q}_{I_{T}}L^{r}}\leq C\|F\|_{L^{\gamma}_{I_{T}}L^{p}}$$
is valid for any $T>0$ and $F\in L^{\gamma}_{I_{T}}L^{p}$.
\end{proposition}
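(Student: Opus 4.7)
The plan is to reduce the Duhamel integral to a convolution in time via the one-dimensional dispersive decay of the Schr\"odinger propagator, and then invoke a Hardy--Littlewood--Sobolev inequality in time, handling the possible gap between $L^{p'}_x$ and $L^r_x$ by interpolation.

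First, record the standard dispersive bound $\|e^{it\partial_x^2}f\|_{L^\infty}\le C|t|^{-1/2}\|f\|_{L^1}$ and interpolate with the $L^2_x$ isometry via Riesz--Thorin to get
\[
\|e^{it\partial_x^2}f\|_{L^{p'}_x}\le C|t|^{-(1/p-1/2)}\|f\|_{L^p_x}\quad(1\le p\le 2).
\]
Applying Minkowski in $x$ to the Duhamel integral and then this bound yields
\[
\Big\|\int_0^t e^{i(t-\tau)\partial_x^2}F(\tau)\,d\tau\Big\|_{L^{p'}_x}\le C\int_0^t|t-\tau|^{-(1/p-1/2)}\|F(\tau)\|_{L^p_x}\,d\tau.
\]
Since $(\tfrac1p,\tfrac1\gamma)\in\widehat T_2$ forces $\tfrac1p\in(1/2,1)$, the exponent $1/p-1/2$ lies in $(0,1/2)$ and the kernel is a locally integrable Riesz kernel. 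The right-hand side is then a Riesz potential in time of order $3/2-1/p$, so Hardy--Littlewood--Sobolev provides an $L^q_t$ bound provided $\tfrac1\gamma-\tfrac1q=\tfrac32-\tfrac1p$ and $1<\gamma<q<\infty$; the latter range condition is automatic from the placement of $(\tfrac1p,\tfrac1\gamma)\in\widehat T_2$ and $(\tfrac1r,\tfrac1q)\in\widehat T_1$. Together with the scaling relation in the hypothesis this proves the proposition in the dual case $r=p'$.

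For the general case $r\ne p'$ the plan is to interpolate the dual endpoint just obtained with the classical homogeneous Strichartz inequality at an admissible pair. The convenient object is the bilinear form
\[
B(F,G)=\int_0^T\!\!\int_{\mathbb R}\overline{G(t,x)}\int_0^t e^{i(t-\tau)\partial_x^2}F(\tau,x)\,d\tau\,dx\,dt.
\]
To put this in a shape suitable for Riesz--Thorin one first removes the retarded truncation via the Christ--Kiselev lemma; this is legitimate because the geometric constraints force $q>2>\gamma$, so the non-retarded operator $\widetilde DF(t)=\int_0^T e^{i(t-\tau)\partial_x^2}F(\tau)\,d\tau$ controls the retarded one $\mathcal DF(t)=\int_0^t e^{i(t-\tau)\partial_x^2}F(\tau)\,d\tau$ in the mixed norm. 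Since $\widetilde D$ factors as $TT^*$ through $L^2_x$, bilinear complex interpolation in the spatial exponents $(1/p,1/r)$ between the dual endpoint ($r=p'$) and the homogeneous Strichartz endpoint ($L^\infty_t L^2_x$) covers exactly the region determined by the scaling relation together with the membership conditions $(\tfrac1r,\tfrac1q)\in\widehat T_1$, $(\tfrac1p,\tfrac1\gamma)\in\widehat T_2$.

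The main obstacle is the bookkeeping: one must check that at each step the HLS exponents stay strictly between $1$ and $\infty$, that the Christ--Kiselev lemma applies (i.e.\ $q>\gamma$), and that the scaling relation in the hypothesis is exactly the one picked out by the interpolation geometry. All three requirements are encoded by the conditions $(\tfrac1r,\tfrac1q)\in\widehat T_1$ (strict interior, placing $(r,q)$ below the Strichartz line) and $(\tfrac1p,\tfrac1\gamma)\in\widehat T_2$ (above the dual Strichartz line), so once this geometry is laid out carefully the argument closes; the full details are in \cite{HYTSU} and \cite{KA}.
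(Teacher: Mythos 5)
The paper gives no proof of Proposition \ref{inter}; it is quoted verbatim from \cite{HYTSU} and \cite{KA}, so the comparison is really with the argument in those references, and your outline is indeed the standard route they follow (dispersive decay plus fractional integration for the ``diagonal'' case, then $TT^*$, Christ--Kiselev and interpolation for the rest). Your treatment of the dual case $r=p'$ is correct and essentially complete: the kernel exponent $\tfrac1p-\tfrac12$ lies in $(0,\tfrac12)$ because $(\tfrac1p,\tfrac1\gamma)\in\widehat{T}_{2}$, the Hardy--Littlewood--Sobolev relation $\tfrac1q=\tfrac1\gamma+\tfrac1p-\tfrac32$ is exactly what the hypothesis $\tfrac2\gamma+\tfrac1p=2+\tfrac2q+\tfrac1r$ becomes when $r=p'$, and the condition $q<\infty$ is precisely the strict inequality $\tfrac1\gamma+\tfrac1p>\tfrac32$ defining $\widehat{T}_{2}$. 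One small omission there: the corner $(\tfrac12,0)\in\widehat{T}_{1}$ (i.e.\ $q=\infty$, $r=2$) is not reachable by HLS and never occurs in the dual case, so it must be supplied by the admissible endpoint; this matters because the paper uses exactly that corner in Proposition \ref{Tsu2}.

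The genuine gap is in the step that handles $r\neq p'$. The sentence asserting that bilinear complex interpolation between the dual family and the admissible/dual-admissible family ``covers exactly the region'' determined by $\widehat{T}_{1}\times\widehat{T}_{2}$ and the scaling relation is not a proof --- it is a restatement of the proposition. To close it you must (i) observe that the HLS argument bounds the \emph{non-retarded} operator $\widetilde{D}$ as well (it does, since HLS ignores the truncation $\tau<t$), so that both endpoint families are available for the same bilinear form before Christ--Kiselev is applied; (ii) check that both endpoint families lie on the affine scaling surface $\tfrac2\gamma+\tfrac1p-\tfrac2q-\tfrac1r=2$ in the reciprocal exponents, so interpolation stays on it; and (iii) exhibit, for an arbitrary admissible quadruple in the region, a $\theta\in[0,1]$ and endpoint quadruples in the two families whose $\theta$-interpolant is the given one, with the endpoint quadruples themselves satisfying the constraints of $\widehat{T}_{1}$, $\widehat{T}_{2}$ and admissibility. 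Point (iii) is a concrete, if tedious, piece of planar geometry and is where the actual content lies; without it the ``general case'' paragraph is a plan rather than an argument. Since the paper itself delegates all of this to \cite{KA} and \cite{HYTSU}, your sketch is acceptable as a citation-level justification, but not as a self-contained proof.
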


Special cases of the following two propositions can be found again in \cite{HYTSU} but since we need them in a more general setting we present their proofs, too.

\begin{proposition}
\label{Tsu1}
Let $q>3$ and $\max\{3,\frac{2q}{q-2}\}<r<\min\{q, \frac{4q}{q-2}\}$. Then there is a constant $C=C_{q,r}>0$ such that for all $T>0$ the quantity

$$\Big\|\int_{0}^{t}e^{i(t-\tau)\partial_{x}^{2}}\tilde{G}(v,w_{1},w_{2})\ d\tau\Big\|_{L^{q}_{I_{T}}L^{r}}$$
is bounded above by 

$$C\cdot\Big[T^{\frac12}\|v\|^{2}_{L^{\frac{4r}{r-2}}_{I_{T}}L^{r}}\|w_{1}-w_{2}\|_{L^{q}_{I_{T}}L^{r}}+T^{\frac{2rq-2q-4r}{2rq}}(\|w_{1}\|_{L^{q}_{I_{T}}L^{r}}^{2}+\|w_{2}\|^{2}_{L^{q}_{I_{T}}L^{r}})\|w_{1}-w_{2}\|_{L^{q}_{I_{T}}L^{r}}\Big].$$
\end{proposition}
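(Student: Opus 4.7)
The plan is to invoke Proposition \ref{inter} with the spatial exponent $p=r/3$ and the temporal exponent $\gamma$ determined by the scaling relation, then expand $\tilde G(v,w_1,w_2)$ into monomials, and finally apply H\"older in space and time to each piece.

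First I set $p=r/3$ and solve $\frac{2}{\gamma}+\frac{1}{p}=2+\frac{2}{q}+\frac{1}{r}$ for $\frac{1}{\gamma}=1+\frac{1}{q}-\frac{1}{r}$. I need to verify that $(1/p,1/\gamma)\in\widehat T_{2}$. Since $3<r<6$ (the upper bound coming from $r<\min\{q,4q/(q-2)\}$, which forces $r<6$ in both cases) one gets $1/p=3/r\in(1/2,1)$. Because $r<q$, one gets $1/r-1/q>0$, hence $1/\gamma<1$; and $1/r-1/q<1/r<1/3$ gives $1/\gamma>1/2$. Finally, the hypothesis $r<4q/(q-2)$ rearranges to $2/r>1/2-1/q$, so $1/p+1/\gamma=2/r+1+1/q>3/2$. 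Hence Proposition \ref{inter} reduces the problem to bounding $\|\tilde G(v,w_1,w_2)\|_{L^\gamma_{I_T}L^p}$.

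Next I expand. Writing $F(v,w):=|v+w|^{2}(v+w)-|v|^{2}v$ one checks $F(v,w)=v^{2}\bar w+2|v|^{2}w+2v|w|^{2}+\bar v w^{2}+|w|^{2}w$, so $\tilde G=F(v,w_1)-F(v,w_2)$ is a sum of monomials each containing exactly one factor of $w_{1}-w_{2}$ or $\overline{w_{1}-w_{2}}$ (obtained via identities like $|w_{1}|^{2}w_{1}-|w_{2}|^{2}w_{2}=|w_{1}|^{2}(w_{1}-w_{2})+w_{1}w_{2}\overline{(w_{1}-w_{2})}+(w_{1}-w_{2})|w_{2}|^{2}$ and analogous ones for the mixed terms). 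The pointwise estimate
$$|\tilde G(v,w_{1},w_{2})|\lesssim \big(|v|^{2}+|w_{1}|^{2}+|w_{2}|^{2}\big)|w_{1}-w_{2}|$$
then follows after absorbing the genuinely mixed terms $|v||w_{i}||w_{1}-w_{2}|$ by $|v||w_{i}|\le\tfrac12(|v|^{2}+|w_{i}|^{2})$.

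It remains to control the two model terms in $L^{\gamma}_{I_{T}}L^{p}$. For $|v|^{2}|w_{1}-w_{2}|$, spatial H\"older with $p=r/3$ gives $\||v|^{2}|w_{1}-w_{2}|\|_{L^{p}_{x}}\le\|v\|_{r}^{2}\|w_{1}-w_{2}\|_{r}$. Since the pair $(r,4r/(r-2))$ is admissible, the scalar function $\|v(\cdot,t)\|_{r}^{2}\|(w_{1}-w_{2})(\cdot,t)\|_{r}$ lies in $L^{\rho}_{I_{T}}$ with $\frac{1}{\rho}=\frac{r-2}{2r}+\frac{1}{q}$ by H\"older in $t$. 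A direct computation gives $\frac{1}{\gamma}-\frac{1}{\rho}=\frac12$, so another H\"older in $t$ introduces the factor $T^{1/2}$ and yields the first summand. For $|w_{i}|^{2}|w_{1}-w_{2}|$ the same spatial H\"older works, and in time the product lies in $L^{q/3}_{I_{T}}$; here $\frac{1}{\gamma}-\frac{3}{q}=1-\frac{1}{r}-\frac{2}{q}=\frac{2rq-2q-4r}{2rq}$, which is positive because the hypothesis $r>2q/(q-2)$ rewrites as $\tfrac{1}{r}+\tfrac{2}{q}<1$. This produces the second summand.

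The only real obstacle is the bookkeeping: one must simultaneously satisfy $(1/p,1/\gamma)\in\widehat T_{2}$ together with the exponent identity in Proposition \ref{inter}, and one must check that every term of $\tilde G$ (including the genuinely mixed ones) can be absorbed into the two clean cases above. Both checks are forced by the sharp restrictions $3<r<\min\{q,4q/(q-2)\}$ and $r>2q/(q-2)$, which is why these appear in the statement.
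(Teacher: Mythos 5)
Your proposal is correct and follows essentially the same route as the paper: reduce via Proposition \ref{inter} with $p=r/3$ and $\frac1\gamma=1+\frac1q-\frac1r$, bound $|\tilde G|$ pointwise by $(|v|^2+|w_1|^2+|w_2|^2)|w_1-w_2|$, and apply H\"older in space and then in time, landing on the exponents $T^{1/2}$ and $T^{\frac{2rq-2q-4r}{2rq}}$ exactly as in the paper (which sets up undetermined H\"older exponents and solves for them, whereas you specify them directly and verify). The only nitpick is that $r>\frac{2q}{q-2}$ rewrites as $\frac2r+\frac2q<1$ (equivalently, it is what places $(\frac1r,\frac1q)$ in $\widehat T_1$), which then implies, rather than equals, the positivity condition $\frac1r+\frac2q<1$ you need.
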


\begin{proof}
Since the pair $(\frac1{r}, \frac1{q})$ belongs to the triangle $\widehat{T}_{1}$ we use Proposition \ref{inter} to estimate the norm

$$\Big\|\int_{0}^{t}e^{i(t-\tau)\partial_{x}^{2}}\tilde{G}(v,w_{1},w_{2})\ d\tau\Big\|_{L^{q}_{I_{T}}L^{r}}$$
by the expression $C_{q,r}\cdot\|\tilde{G}\|_{L^{\gamma}_{I_{T}}L^{p}}$ where the pair $(\frac1{p},\frac1{\gamma})\in\widehat{T}_{2}$ satisfies

$$\frac2{\gamma}+\frac1{p}=2+\frac2{q}+\frac1{r}$$
and $C_{q,r}$ is a positive constant. Then we bound the function $|\tilde{G}|$ pointwise by the expression $(|v|^{2}+|w_{1}|^{2}+|w_{2}|^{2})|w_{1}-w_{2}|$ and proceed with H\"older in the space norm first and then in the time norm and try to identify all the exponents that appear in the procedure as functions of the variables $q, r$ only. 

In the space norm we use the exponent $A$ with conjugate $A'$ and then in the time variable we use the exponents $B_{1}, B_{2}, B_{3}$ to arrive at the upper bound

$$\||v|^{2}|w_{1}-w_{2}|\|_{L^{\gamma}_{I_{T}}L^{p}}\leq T^{\frac1{\gamma B_{3}}}\|v\|^{2}_{L^{2\gamma B_{1}}_{I_{T}}L^{2pA}}\|w_{1}-w_{2}\|_{L^{\gamma B_{2}}_{I_{T}}L^{pA'}}.$$
Since we need $L^{\gamma B_{2}}_{I_{T}}L^{pA'}=L^{q}_{I_{T}}L^{r}$ we require $q=\gamma B_{2}$, $r=pA'$. Then the pair $(2pA, 2\gamma B_{1})$ has to be admissible which means 

$$\frac1{2\gamma B_{1}}+\frac1{4pA}=\frac14,$$
and since $B_{1}, B_{2}, B_{3}$ are H\"older exponents we need 

$$\frac1{B_{1}}+\frac1{B_{2}}+\frac1{B_{3}}=1.$$
Similarly, we have the inequality

$$\||w_{1}|^{2}|w_{1}-w_{2}|\|_{L^{\gamma}_{I_{T}}L^{p}}\leq T^{\frac1{\gamma\beta_{3}}}\|w_{1}\|^{2}_{L^{2\gamma\beta_{1}}_{I_{T}}L^{2p\alpha}}\|w_{1}-w_{2}\|_{L^{\gamma\beta_{2}}_{I_{T}}L^{p\alpha'}}$$
where $\alpha, \alpha'$ are conjugate exponents and since we need all norms to be on the space $L^{q}_{I_{T}}L^{r}$, we require the identities $2\gamma\beta_{1}=q$, $2p\alpha=r$, $\gamma\beta_{2}=q$, $p\alpha'=r$. Again, the numbers $\beta_{1}, \beta_{2}, \beta_{3}$ are H\"older exponents and this implies

$$\frac1{\beta_{1}}+\frac1{\beta_{2}}+\frac1{\beta_{3}}=1.$$
From $p\alpha'=r=pA'$ we get $\alpha=A$ and from $2p\alpha=r=pA'$ we get $2\alpha=A'$. Thus, $A=\frac32$ and $A'=3$. From there on it is easy to solve and find the following expressions for all the exponents in terms of $q, r$: 

$$p=\frac{r}3$$

$$\gamma=\frac{2qr}{2qr+2r-2q}$$

$$B_{1}=\frac{2qr+2r-2q}{q(r-2)}=2\beta_{1}$$

$$B_{2}=\frac{2rq+2r-2q}{2r}=\beta_{2}$$

$$B_{3}=\frac{2rq+2r-2q}{rq}$$

$$\beta_{3}=\frac{2qr+2r-2q}{2qr-2q-4r}.$$
Substituting we arrive at the desired upper bound. The restrictions on $q$ and $r$ arise from the fact that the pairs  $(\frac1{r}, \frac1{q})\in\widehat{T}_{1}$ and $(\frac1{p},\frac1{\gamma})\in\widehat{T}_{2}$ and that the numbers $B_{i}, \beta_{i}\in (1,\infty)$ for all $i=1, 2, 3$.  

\end{proof}

\begin{proposition}
\label{Tsu2}
Let $q>\frac43$ and $3<r<6$. Then there is a constant $c=c_{q,r}>0$ such that for all $T>0$ the quantity

$$\Big\|\int_{0}^{t}e^{i(t-\tau)\partial_{x}^{2}}\tilde{G}(v,w_{1},w_{2})\ d\tau\Big\|_{L^{\infty}_{I_{T}}L^{2}}$$
is bounded above by 

$$c\cdot\Big[T^{\frac{3qr-4r-2q}{4qr}}\|v\|^{2}_{L^{\frac{4r}{r-2}}_{I_{T}}L^{r}}\|w_{1}-w_{2}\|_{L^{q}_{I_{T}}L^{r}}+T^{\frac{5qr-6q-12r}{4rq}}(\|w_{1}\|_{L^{q}_{I_{T}}L^{r}}^{2}+\|w_{2}\|_{L^{q}_{I_{T}}L^{r}}^{2})\|w_{1}-w_{2}\|_{L^{q}_{I_{T}}L^{r}}\Big].$$
\end{proposition}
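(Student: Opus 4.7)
The strategy will closely parallel the proof of Proposition \ref{Tsu1}, with the only structural change being that the output norm is now $L^{\infty}_{I_T}L^{2}$ instead of $L^{q}_{I_T}L^{r}$. My first step is to note that $(1/2,0) \in \widehat{T}_{1}$ (explicitly included in its definition), so Proposition \ref{inter} applies with outer exponents $(r_{\rm out},q_{\rm out})=(2,\infty)$. This reduces the Duhamel term to
\[
\Big\|\int_{0}^{t}e^{i(t-\tau)\partial_{x}^{2}}\tilde{G}(v,w_{1},w_{2})\,d\tau\Big\|_{L^{\infty}_{I_{T}}L^{2}}
\le C\,\|\tilde{G}\|_{L^{\gamma}_{I_{T}}L^{p}},
\]
where the inner pair $(1/p,1/\gamma)\in \widehat{T}_{2}$ is constrained by the scaling relation $\tfrac{2}{\gamma}+\tfrac{1}{p}=\tfrac{5}{2}$.

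Next, I would use the pointwise bound $|\tilde{G}(v,w_{1},w_{2})|\lesssim (|v|^{2}+|w_{1}|^{2}+|w_{2}|^{2})|w_{1}-w_{2}|$ and treat the three summands separately. For the term $|v|^{2}|w_{1}-w_{2}|$ I apply H\"older in space with conjugate pair $(A,A')$ and then H\"older in time with exponents $(B_{1},B_{2},B_{3})$, arranging things so that $v$ is measured in the Strichartz-admissible norm $L^{4r/(r-2)}_{I_{T}}L^{r}$ and $w_{1}-w_{2}$ in $L^{q}_{I_{T}}L^{r}$. The constraints $2pA=r$ and $pA'=r$ jointly force $A=3/2$, $A'=3$, and therefore $p=r/3$; combined with the scaling relation this fixes $\gamma=4r/(5r-6)$. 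The equations $2\gamma B_{1}=4r/(r-2)$, $\gamma B_{2}=q$, and $\tfrac{1}{B_{1}}+\tfrac{1}{B_{2}}+\tfrac{1}{B_{3}}=1$ then determine $B_{3}$, and the emerging $T$ exponent $1/(\gamma B_{3})$ is a routine computation yielding $(3qr-4r-2q)/(4qr)$.

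For the two remaining summands $|w_{j}|^{2}|w_{1}-w_{2}|$, $j=1,2$, I follow the same template with spatial exponents $(\alpha,\alpha')$ and temporal exponents $(\beta_{1},\beta_{2},\beta_{3})$. Now the requirements $2p\alpha=p\alpha'=r$ again enforce $\alpha=3/2$ (hence the same $p=r/3$, which is important for consistency), while $2\gamma\beta_{1}=\gamma\beta_{2}=q$ together with $\sum 1/\beta_{i}=1$ produce the $T$ exponent $(5qr-6q-12r)/(4qr)$. Summing the three contributions yields the claimed bound.

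Finally I would verify that the parameter ranges are consistent: $r\in(3,6)$ guarantees $p=r/3\in(1,2)$ and also $(1/p,1/\gamma)\in\widehat{T}_{2}$, since $\gamma=4r/(5r-6)\in(1,4/3)$ and $1/p+1/\gamma=(5r+6)/(4r)>3/2$ iff $r<6$. The bookkeeping involved in simultaneously matching (i) the Strichartz admissibility for the pair carrying $v$, (ii) the target space $L^{q}_{I_{T}}L^{r}$ for the terms carrying $w_{j}$, and (iii) the H\"older relations in space and time is the only genuinely delicate part; the miracle is that the spatial constraint rigidly pins down $p=r/3$ in both computations so that $\gamma$ is the same throughout and only the temporal exponents differ between the two terms.
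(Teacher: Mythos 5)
Your proposal is essentially identical to the paper's own proof: the authors likewise note $(\tfrac12,0)\in\widehat{T}_{1}$, apply Proposition \ref{inter} with the scaling relation $\tfrac{2}{\tilde{\gamma}}+\tfrac{1}{\tilde{p}}=\tfrac52$, bound $|\tilde{G}|$ pointwise, and run the same H\"older bookkeeping that pins down $\tilde{p}=r/3$, $\tilde{\gamma}=\tfrac{4r}{5r-6}$ and the two stated $T$-exponents. The computation and parameter checks you give are correct, so there is nothing to add.
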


\begin{proof}
We have $(\frac12, 0)\in\widehat{T}_{1}$ and so by Proposition \ref{inter}, for all pairs $(\frac1{\tilde{p}},\frac1{\tilde{\gamma}})\in\widehat{T}_{2}$ that satisfy

$$\frac2{\tilde{\gamma}}+\frac1{\tilde{p}}=\frac52$$
we bound the expression 

$$\Big\|\int_{0}^{t}e^{i(t-\tau)\partial_{x}^{2}}\tilde{G}(v,w_{1},w_{2})\ d\tau\Big\|_{L^{\infty}_{I_{T}}L^{2}}$$
by $c_{q,r}\cdot\|\tilde{G}\|_{L^{\tilde{\gamma}}_{I_{T}}L^{\tilde{p}}}$, where $c_{q,r}$ is a positive constant. Again by estimating $|\tilde{G}|$ pointwise and using H\"older we arrive at exactly the same upper bound as in the previous proof but with exponents $\tilde{A}, \tilde{B_{1}}, \tilde{B_{2}}, \ldots$ In this case they are more easily identified as $\tilde{A}=\frac32, \tilde{A}'=3$, $\tilde{p}=\frac{r}3$

$$\tilde{\gamma}=\frac{4r}{5r-6}=2\tilde{\beta_{1}}$$

$$\tilde{B_{1}}=\frac{5r-6}{2(r-2)}=\tilde{\beta_{2}}$$

$$\tilde{B_{2}}=\frac{q(5r-6)}{4r}$$

$$\tilde{B_{3}}=\frac{q(5r-6)}{3qr-4r-2q}$$

$$\tilde{\beta_{3}}=\frac{q(5r-6)}{5qr-6q-12r}.$$
\end{proof}

\end{section}

\begin{section}{proof of theorem \ref{main}}
\label{pr}
\begin{proof}

Let us fix an $r\in(3,4]$ and we choose $Q$ large enough such that 

\begin{equation}
\label{lim}
(6r-r^{2})Q^{2}>2\alpha[(11r^{2}-19r+6)Q^{2}-14r^{2}Q-24r^{2}].
\end{equation}
This can be done since $\alpha\in(0,\frac{6r-r^{2}}{22r^{2}-38r+12})$ and

$$\lim_{Q\to\infty}\frac{(6r-r^{2})Q^{2}}{2[(11r^{2}-19r+6)Q^{2}-14r^{2}Q-24r^{2}]}=\frac{6r-r^{2}}{22r^{2}-38r+12}.$$
For this $Q$, by Proposition \ref{Tsu1} there is a constant $C_{Q,r}>0$ and by Proposition \ref{Tsu2} a constant $c_{Q,r}>0$. Define $c_{Q}=\max\{C_{Q,r}, c_{Q,r}\}$. We start by choosing a positive number $M_{0}\geq k_{2}^{4}\cdot c_{0}^{4}$ such that the following four inequalities are true

\begin{equation}
\label{first}
12\cdot c_{Q}\cdot k_{1}^{2}\cdot c_{0}^{2}\leq\sqrt{M_{0}}
\end{equation}

\begin{equation}
\label{second}
27\cdot c_{Q}\cdot c_{0}^{2}\cdot c_{E}^{2}\cdot c_{I}^{2}\cdot\Big (\frac14\Big)^{\frac2{Q}}\leq\frac{M_{0}^{\frac{rQ-Q-2r}{rQ}}}{(\frac54)^{1-\frac2{r}}}
\end{equation}

\begin{equation}
\label{third}
12\cdot c_{Q}\cdot k_{1}^{2}\cdot c_{0}^{3}\cdot c_{E}\cdot c_{I}\cdot\Big(\frac14\Big)^{\frac1{Q}}\leq\frac{M_{0}^{\frac{3Qr-4r-2Q}{4Qr}}}{(\frac54)^{\frac12-\frac{1}{r}}}
\end{equation}

\begin{equation}
\label{forth}
27\cdot c_{Q}\cdot c_{0}^{3}\cdot c_{E}^{3}\cdot c_{I}^{3}\cdot\Big(\frac14\Big)^{\frac3{Q}}\leq\frac{M_{0}^{\frac{5Qr-6Q-12r}{4rQ}}}{(\frac54)^{\frac32-\frac3{r}}}.
\end{equation}
Furthermore, for each non negative integer $k$ we let

\begin{equation}
\label{defdef}
M_{k}=(5+k)^{\frac{6(r-2)Q}{5Qr-6Q-12r}}\cdot\frac{M_{0}}{5^{\frac{6(r-2)Q}{5Qr-6Q-12r}}}
\end{equation}
and 

\begin{equation}
\label{dede}
\delta_{N}^{(k)}=\frac{M_{k}^{-1}}{N^{4\alpha}}>0.
\end{equation} 
Since $M_{k}, N$ are going to be large numbers we can always assume that $\delta_{N}^{(k)}<\frac14$. Note that for each $k$, $M_{k+1}>M_{k}\geq M_{0}$. Our goal is to start with the initial splitting of the given function $u_{0}=\phi_{0}^{N}+\psi_{0}^{N}$ and show that there is a solution $u$ of NLS (\ref{maineq}) in the interval $[0,\delta_{N}^{(0)}]$ of the form $u=v^{(0)}+w^{(0)}$ where $v^{(0)}$ and $w^{(0)}$ lie in the required spaces and such that for all $t\in[0,\delta_{N}^{(0)}]$ we have the estimate

$$\|w(t)-e^{it\partial_{x}^{2}}\psi_{0}^{N}\|_{L^{\infty}_{I_{\delta_{N}^{(0)}}}L^{2}}\leq 2\cdot\frac1{N^{1+\alpha\cdot\frac{Qr-4r-2Q}{Qr}}}.$$
Then we extend our solution to the interval $[0,\delta_{N}^{(0)}+\delta_{N}^{(1)}]$ by repeating the same procedure but for the new initial data defined as the sum of the following two functions

$$\phi_{1}^{N}(x)=v^{(0)}(\delta_{N}^{(0)}, x)\pm i\int_{0}^{\delta_{N}^{(0)}}e^{i(\delta_{N}^{(0)}-\tau)\partial_{x}^{2}}G(v^{(0)},w^{(0)})\ d\tau\in L^{2}$$
and

$$\psi_{1}^{N}(x)=e^{i\delta_{N}^{(0)}\partial_{x}^{2}}\psi_{0}^{N}(x)\in M_{r,\frac{r}{r-1}}.$$
Here it is important to point out that in order to be able to prove such a claim we must have that the $L^{2}$ norm of the new function $\phi_{1}^{N}$ is bounded from above by the quantity $2c_{0}N^{\alpha}$. Inductively, at the $(k+1)$th step we define the functions $\phi_{k+1}^{N}$ and $\psi_{k+1}^{N}$ by the formulas

\begin{equation}
\label{imp1}
\phi_{k+1}^{N}(x)=v^{(k)}(\delta_{N}^{(k)}, x)\pm i\int_{0}^{\delta_{N}^{(k)}}e^{i(\delta_{N}^{(k)}-\tau)\partial_{x}^{2}}G(v^{(k)},w^{(k)})\ d\tau\in L^{2}
\end{equation}
and 

\begin{equation}
\label{imp2}
\psi_{k+1}^{N}(x)=e^{i\delta_{N}^{(k)}\partial_{x}^{2}}\psi_{k}^{N}(x)=e^{i\sum_{i=0}^{k}\delta_{N}^{(i)}\partial_{x}^{2}}\psi_{0}^{N}(x)\in M_{r,\frac{r}{r-1}}.
\end{equation}
Due to the way these functions were chosen, we have the following estimate on the $L^{2}$ norm of $\phi_{k+1}^{N}$

\begin{equation}
\label{normm}
\|\phi_{k+1}^{N}\|_{2}\leq\|\phi_{0}^{N}\|_{2}+2(k+1) N^{-1-\alpha\cdot\frac{Qr-4r-2Q}{Qr}}\leq c_{0}N^{\alpha}+2(k+1) N^{-1-\alpha\cdot\frac{Qr-4r-2Q}{Qr}}.
\end{equation}
To make this precise, we will use induction on $k$. Let us assume that we have made $k$ steps already and that $\|\phi_{k+1}^{N}\|_{2}\leq 2c_{0}N^{\alpha}$. We want to do the $(k+1)$ step. That is, we want to solve the $1$-dimensional cubic (NLS) with initial data $\phi_{k+1}^{N}+\psi_{k+1}^{N}$ in the interval $[0,\delta_{N}^{(k+1)}]$. First we solve the initial value problem

\begin{equation}
\label{wd0}
\begin{cases} iv+v_{xx}\pm|v|^{2}v=0 &,\ (t,x)\in\mathbb R^{2}\\
v(0,x)=\phi_{k+1}^{N}(x)\in L^{2} &,\ x\in\mathbb R\\
\end{cases},
\end{equation}
from which we obtain a globally defined function $v^{(k+1)}$ such that $\|v^{(k+1)}(t,\cdot)\|_{2}=\|\phi_{k+1}^{N}\|_{2}$ for all times $t$ and such that $\|v^{(k+1)}\|_{L^{\frac{4r}{r-2}}_{I_{\delta_{N}^{(k+1)}}}L^{r}}\leq k_{1}\|\phi_{k+1}^{N}\|_{2}\leq2c_{0}k_{1}N^{\alpha}$ since the pair $(r,\frac{4r}{r-2})$ is admissible. Then we need to solve 

\begin{equation}
\label{wd}
\begin{cases} iw+w_{xx}\pm G(v^{(k+1)},w)=0 &,\ (t,x)\in\mathbb R^{2}\\
w(0,x)=\psi_{k+1}^{N}(x)\in M_{r,\frac{r}{r-1}} &,\ x\in\mathbb R\\
\end{cases}.
\end{equation}
For this we define the function space

\begin{equation}
\label{spaceV}
V_{N}^{(k+1)}=\Big\{w\in L^{Q}_{I_{\delta_{N}^{(k+1)}}}L^{r}:\|w\|_{L^{Q}_{I_{\delta_{N}^{(k+1)}}}L^{r}}\leq\frac{3c_{0}c_{E}c_{I}(\frac14)^{\frac1{Q}}(1+\frac{k+2}{4})^{\frac12-\frac1{r}}}{N}\Big\}
\end{equation}
and the operator

\begin{equation}
\label{oper}
T^{(k+1)}w=e^{it\partial_{x}^{2}}\psi_{k+1}^{N}\pm i\int_{0}^{t}e^{i(t-\tau)\partial_{x}^{2}}G(v^{(k+1)},w)\ d\tau.
\end{equation}

Our claim is that $T^{(k+1)}(V_{N}^{(k+1)})\subset V_{N}^{(k+1)}$. Indeed, let $w\in V_{N}^{(k+1)}$, then for $t\in[0,\delta_{N}^{(k+1)}]$ we have

$$\|e^{it\partial_{x}^{2}}\psi_{k+1}^{N}\|_{r}\leq c_{E}\|e^{it\partial_{x}^{2}}\psi_{k+1}^{N}\|_{M_{r,\frac{r}{r-1}}}=c_{E}\|e^{i(t+\sum_{i=0}^{k}\delta_{N}^{(i)})\partial_{x}^{2}}\psi_{0}^{N}\|_{M_{r,\frac{r}{r-1}}},$$
which is bounded above by

$$c_{E}c_{I}\Big(1+|t+\sum_{i=0}^{k}\delta_{N}^{(i)}|\Big)^{\frac12-\frac1{r}}\|\psi_{0}^{N}\|_{M_{r,\frac{r}{r-1}}}\leq \frac{c_{0}c_{E}c_{I}}{N}\Big(1+\sum_{i=0}^{k+1}\delta_{N}^{(i)}\Big)^{\frac12-\frac1{r}}\leq\frac{c_{0}c_{E}c_{I}}{N}\Big(1+\frac{k+2}{4}\Big)^{\frac12-\frac1{r}}.$$
From which it follows that

$$\|e^{it\partial_{x}^{2}}\psi_{k+1}^{N}\|_{L^{Q}_{I_{\delta_{N}^{(k+1)}}}L^{r}}\leq\frac{c_{0}c_{E}c_{I}}{N}\Big(1+\frac{k+2}{4}\Big)^{\frac12-\frac1{r}}\Big(\frac14\Big)^{\frac1{Q}}.$$
For the convolution integral part of the operator $T^{(k+1)}$ we use Proposition \ref{Tsu1} with the functions $v=v^{(k+1)}$, $w_{1}=w$ and $w_{2}=0$, to estimate the norm 

$$\Big\|\int_{0}^{t}e^{i(t-\tau)\partial_{x}^{2}}G(v^{(k+1)},w)\ d\tau\Big\|_{L^{Q}_{I_{\delta_{N}^{(k+1)}}}L^{r}}.$$
Thus, we have the upper bound

$$c_{Q}(\delta_{N}^{(k+1)})^{\frac12}\|v^{(k+1)}\|_{L^{\frac{4r}{r-2}}_{I_{\delta_{N}^{(k+1)}}}L^{r}}^{2}\|w\|_{L^{Q}_{I_{\delta_{N}^{(k+1)}}}L^{r}}+c_{Q}(\delta_{N}^{(k+1)})^{\frac{rQ-Q-2r}{rQ}}\|w\|_{L^{Q}_{I_{\delta_{N}^{(k+1)}}}L^{r}}^{3}$$
and this quantity in its turn is bounded from above by 

$$\frac{12c_{Q}k_{1}^{2}c_{0}^{3}c_{E}c_{I}(\frac14)^{\frac1{Q}}(1+\frac{k+2}{4})^{\frac12-\frac1{r}}}{\sqrt{M_{k+1}}}\cdot\frac1{N}+\frac{27c_{Q}c_{0}^{3}c_{E}^{3}c_{I}^{3}(\frac14)^{\frac3{Q}}(1+\frac{k+2}{4})^{\frac32-\frac3{r}}}{M_{k+1}^{\frac{rQ-Q-2r}{rQ}}}\cdot\frac1{N^{3+4\alpha(\frac{rQ-Q-2r}{rQ})}}.$$
From the choice of the $M_{k+1}$, see inequality (\ref{second}), we get that this is less than

$$2\cdot\frac{c_{0}c_{E}c_{I}}{N}\Big(1+\frac{k+2}{4}\Big)^{\frac12-\frac1{r}}\Big(\frac14\Big)^{\frac1{Q}}$$
which shows that $T^{(k+1)}$ maps the space $V_{N}^{(k+1)}$ into itself. 

Our next step is to show that for any $w_{1}, w_{2}\in V_{N}^{(k+1)}$ we have the contraction property

$$\|T^{(k+1)}(w_{1})-T^{(k+1)}(w_{2})\|_{L^{Q}_{I_{\delta_{N}^{(k+1)}}}L^{r}}<\frac23\|w_{1}-w_{2}\|_{L^{Q}_{I_{\delta_{N}^{(k+1)}}}L^{r}}.$$
The calculations are similar to the ones we just presented. They follow from Proposition \ref{Tsu1} and from the fact that $N$ can be chosen to be larger than $2$. By the Banach contraction mapping principle we immediately obtain a solution of (\ref{wd}) that lies in the space $V_{N}^{(k+1)}$ and is defined for $t\in[0,\delta_{N}^{(k+1)}]$. 

What remains is to estimate the quantity

$$\|w^{(k+1)}(t)-e^{it\partial_{x}^{2}}\psi_{k+1}^{N}\|_{L^{\infty}_{I_{\delta_{N}^{(k+1)}}}L^{2}}.$$
For this we use Proposition \ref{Tsu2} and get the upper bound

$$c_{Q}\Big[(\delta_{N}^{(k+1)})^{\frac{3Qr-4r-2Q}{4Qr}}\|v\|^{2}_{L^{\frac{4r}{r-2}}_{I_{\delta_{N}^{(k+1)}}}L^{r}}+(\delta_{N}^{(k+1)})^{\frac{5Qr-6Q-12r}{4Qr}}\|w\|^{3}_{L^{Q}_{I_{\delta_{N}^{(k+1)}}}L^{r}}\Big].$$
Substituting, we are able to bound this quantity from above by the sum of the following two expressions

$$\frac{12c_{Q}k_{1}^{2}c_{0}^{3}c_{E}c_{I}(\frac14)^{\frac1{Q}}(1+\frac{k+2}{4})^{\frac12-\frac1{r}}}{M_{k+1}^{\frac{3Qr-4r-2Q}{4Qr}}}\cdot\frac1{N^{1+\alpha\cdot\frac{Qr-4r-2Q}{Qr}}}$$

$$\frac{27c_{Q}c_{0}^{3}c_{E}^{3}c_{I}^{3}(\frac14)^{\frac3{Q}}(1+\frac{k+2}{4})^{\frac32-\frac3{r}}}{M_{k+1}^{\frac{5Qr-6Q-12r}{4Qr}}}\cdot\frac1{N^{3+\alpha\cdot\frac{5Qr-6Q-12r}{rQ}}}.$$
By the choice of $M_{k+1}$, see inequalities (\ref{third}) and (\ref{forth}), we get the desired inequality

\begin{equation}
\label{small}
\|w^{(k+1)}(t)-e^{it\partial_{x}^{2}}\psi_{k+1}^{N}\|_{L^{\infty}_{I_{\delta_{N}^{(k+1)}}}L^{2}}\leq 2\cdot N^{-1-\alpha\cdot\frac{Qr-4r-2q}{Qr}}.
\end{equation}
Finally, to prove that a global solution exists, it suffices to show that the following sum $\sum_{k=0}^{N^{1+\alpha\cdot\frac{2Qr-4r-2Q}{Qr}}}\delta_{N}^{(k)}$ diverges as $N\to\infty$. In other words 

\begin{equation}
\label{sumsum}
\lim_{N\to\infty}\frac1{N^{4\alpha}}\cdot\sum_{k=0}^{N^{1+\alpha\cdot\frac{2Qr-4r-2Q}{Qr}}}\frac1{(5+k)^{\frac{6(r-2)Q}{5Qr-6Q-12r}}}=\infty.
\end{equation}
By the use of the Euler-Maclaurin summation formula it is easy to see that the sum

$$\sum_{k=0}^{n}\frac1{(5+k)^{\beta}}$$
is asymptotic to $n^{1-\beta}$, for $0<\beta<1$. Therefore, if 

\begin{equation}
\label{fin}
\Big[1+\alpha\cdot\frac{2Qr-4r-2Q}{Qr}\Big]\Big[1-\frac{6(r-2)Q}{5Qr-6Q-12r}\Big]-4\alpha>0
\end{equation}
we are done. But this is equivalent to $(6r-r^{2})Q^{2}>2\alpha[(11r^{2}-19r+6)Q^{2}-14r^{2}Q-24r^{2}]$ which is exactly how $Q$ was chosen from the beginning of the proof. 

About the uniqueness assertion of the global solution it suffices to observe that for large $Q$ the space $L^{Q}_{I_{T}}L^{r}$ is a subspace of $L^{\frac{4r}{r-2}}_{I_{T}}L^{r}$ and a supremum type argument with Proposition \ref{Tsu1} immediately yield the desired result. 
\end{proof}

\textbf{Acknowledgments}: The authors gratefully acknowledge financial support by the Deut\-sche Forschungs\-gemeinschaft (DFG) through CRC 1173. D. H. also thanks the Alfried Krupp von Bohlen und Halbach Foundation for financial support.

\end{section}

\end{document}